\newcounter{Scounter}
\newtheorem{thm}{Theorem}
\newtheorem{lemma}[thm]{Lemma}
\newtheorem{rem}[thm]{Remark}
\newtheorem{claim}{Claim}
\newcommand{\Case}[1]{\noin{\bf Case #1}}
\newcommand{\qed}{{$\quad\square$\vs{3.6}}}
\newcommand{\vs}[1]{\vspace*{#1 mm}}
\newcommand{\noin}{\noindent}
\def\M{{ \mathcal{M}}}
\def\thanks#1{%
   \footnotemark
   \edef\@tempa{\noexpand\noexpand\noexpand\footnotetext[\the\c@footnote]}%
   \toks@\expandafter{\@thanks}%
   \toks\tw@{{#1}}
   \xdef\@thanks{\the\toks@\@tempa\the\toks\tw@}}
\begin{document}

\title{A note on a Brooks' type theorem for DP-coloring}

\author{
Seog-Jin Kim\thanks{Department of Mathematics Education, Konkuk University,
Korea.
e-mail: {\tt skim12@konkuk.ac.kr
}}\thanks{
This research was supported by Basic Science Research Program through the National Research Foundation of Korea(NRF) funded by the Ministry of Education(NRF-2015R1D1A1A01057008).}
and 
Kenta Ozeki\thanks{Faculty of Environment and Information Sciences,
Yokohama National University,
Japan. 
e-mail: {\tt ozeki-kenta-xr@ynu.ac.jp}
}
}

\date{July 13, 2017}
\maketitle

\begin{abstract}
Dvo\v{r}\'{a}k and Postle \cite{DP} 
introduced a \textit{DP-coloring} of a simple graph
as a generalization of a list-coloring.
They proved a Brooks' type theorem for a DP-coloring,
and 
Bernshteyn, Kostochka and Pron \cite{BKP}
extended it to a DP-coloring of multigraphs.
However,
detailed structure when a multigraph does not admit a DP-coloring was not specified in \cite{BKP}.
In this note,
we make this point clear
and give the complete structure.
This is also motivated by 
the relation to signed coloring 
of signed graphs.
\end{abstract}

\noindent
{\bf Keywords:} 
Coloring,
list-coloring,
DP-coloring,
Brooks' type theorem.

\section{Introduction}

\subsection{List-coloring and DP-coloring}

We denote by $[k]$ the set of integers from $1$ to $k$.
A $k$-coloring of a graph $G$
is a mapping $f : V(G) \rightarrow [k]$
such that $f(u) \not= f(v)$ for any $uv \in E(G)$.
The minimum integer $k$
such that $G$ admits a $k$-coloring 
is called the \textit{chromatic number of $G$},
and denoted by $\chi(G)$.

A \textit{list assignment} $L : V(G) \rightarrow 2^{[k]}$ of $G$
is a mapping that assigns a set of colors to each vertex.
A proper coloring $f: V(G) \rightarrow Y$ where $Y$ is a set of colors
is called an \textit{$L$-coloring} of $G$ if
$f(u) \in L(u)$ for any $u \in V(G)$.
A list assignment $L$ is called a \textit{$t$-list assignment} if 
$|L(u)| \geq t$ for any $u \in V(G)$.
The \textit{list-chromatic number} or the \textit{choice number} of $G$,
denoted by $\chi_{\ell}(G)$,
is the minimum integer $t$
such that $G$ admits an $L$-coloring for each $t$-list assignment $L$.


Since a $k$-coloring corresponds to an $L$-coloring
with $L(u) = [k]$ for any $u \in V(G)$,
we have $\chi(G) \leq \chi_{\ell}(G)$.
It is well-known that 
there are infinitely many graphs $G$ 
satisfying $\chi (G) < \chi_{\ell}(G)$,
and the gap can be arbitrary large:
Consider for example,
the complete bipartite graph $K_{t,t^t}$,
which satisfies 
$2 = \chi (K_{t,t^t}) < \chi_{\ell}(K_{t,t^t}) = t+1$.
A list assignment $L$ is called a \textit{degree-list assignment} if 
$|L(u)| \geq d_G(u)$ for any $u \in V(G)$,
where $d_G(u)$ denotes the degree of $u$ in $G$.
A graph $G$ is said to be \textit{degree-choosable}
if $G$ admits an $L$-coloring for degree-list assignment.
A Brooks' type theorem for degree-choosability was shown
by Borodin \cite{Borodin}, and independently Erd\H{o}s, Rubin, and Taylor \cite{ERT}.
See also \cite{KSW} for a shorter proof.

\begin{thm}
\label{BERT}
A connected graph $G$ is not degree-choosable
if and only if each block of $G$ is isomorphic to $K_n$ for some integer $n$
or $C_n$ for some odd integer $n$.
\end{thm}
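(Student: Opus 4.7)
The plan is to establish the two directions separately. For ($\Leftarrow$), a clean gluing argument by induction on the number of blocks works: in the base case $G$ is a single block, and $G=K_n$ receives $L(u):=\{1,\ldots,n-1\}$ (a degree-list with no proper coloring since $\chi(K_n)=n$) while $G=C_n$ with $n$ odd receives $L(u):=\{1,2\}$ (a degree-list with no proper coloring since $\chi(C_n)=3$). For the inductive step I choose an endblock $B$ of $G$ attached at a cut vertex $v$ and write $G=G_1\cup G_2$ with $G_1:=B$ and $G_2:=G-(V(B)\setminus\{v\})$, so that $V(G_1)\cap V(G_2)=\{v\}$. By induction each $G_i$ admits a bad degree-list $L_i$, and after relabeling colors I may assume $L_1(v)\cap L_2(v)=\emptyset$. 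Define $L(u):=L_i(u)$ for $u\in V(G_i)\setminus\{v\}$ and $L(v):=L_1(v)\cup L_2(v)$; then $|L(v)|=d_{G_1}(v)+d_{G_2}(v)=d_G(v)$, and any proper $L$-coloring $f$ of $G$ would have $f(v)\in L_i(v)$ for some $i$, whose restriction to $G_i$ would be a proper $L_i$-coloring, contradicting the choice of $L_i$.

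For ($\Rightarrow$) I prove the contrapositive: if $G$ is connected and some block $B$ of $G$ is neither $K_n$ nor an odd cycle, then $G$ is degree-choosable. The proof goes by strong induction on $|V(G)|$, and the main tool is the \emph{deletion lemma}: if $v\in V(G)$ is such that every component of $G-v$ is degree-choosable, then $G$ itself is degree-choosable. (Given a degree-list $L$ on $G$, pick $c\in L(v)$, set $L'(u):=L(u)\setminus\{c\}$ for $u\in N(v)$ and $L'(u):=L(u)$ otherwise; then $L'$ restricts to a degree-list on each component of $G-v$, which therefore has a proper $L'$-coloring, and these assemble with $f(v):=c$ into a proper $L$-coloring of $G$.) Provided $G\neq B$, an endblock $B'\neq B$ of $G$ exists, and for any $v\in V(B')$ that is not a cut vertex of $G$, the graph $G-v$ is connected and still contains the block $B$, so by induction $G-v$ is degree-choosable and the deletion lemma applies. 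This reduces the argument to the case $G=B$, i.e., $G$ is itself $2$-connected and neither $K_n$ nor an odd cycle.

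The main obstacle is this $2$-connected case. If $G$ is an even cycle, the statement follows from the classical $2$-choosability of even cycles. Otherwise $G$ is $2$-connected and not a cycle, so its ear decomposition exhibits a $\theta$-subgraph consisting of three internally disjoint paths of lengths $a,b,c$ between a common pair of vertices. Since $(a+b)+(b+c)+(c+a)=2(a+b+c)$ is even, at least one of $a+b$, $b+c$, $c+a$ is even, so $G$ contains an even cycle. Using this cycle as the seed of the coloring and extending greedily along a BFS order rooted on it --- or equivalently producing an orientation of $G$ with no source and no odd directed cycle and invoking the Alon--Tarsi kernel method --- one constructs a proper $L$-coloring for every degree-list $L$. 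The technical heart is in verifying that such a seed or orientation can be completed over \emph{every} $2$-connected graph outside the complete-or-odd-cycle family; the case analysis on the ear decomposition of $G$ is where I expect the bulk of the work to lie.
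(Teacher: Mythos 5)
This statement is quoted in the paper from Borodin and Erd\H{o}s--Rubin--Taylor (with \cite{KSW} cited for a short proof); the paper itself gives no proof, so your attempt has to stand on its own. The ($\Leftarrow$) direction and the reduction of ($\Rightarrow$) to the $2$-connected case are correct and standard: the disjoint-list gluing at cut vertices, and the deletion lemma applied to a non-cut vertex of an end block other than $B$, are exactly the right moves and are carried out properly.

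The gap is in the heart of the theorem, the case where $G$ is $2$-connected, not complete, not a cycle. Finding an even cycle $C$ via a theta-subgraph is fine, but your plan of ``extend greedily by BFS toward $C$, then finish using $2$-choosability of even cycles'' only works if $C$ is \emph{induced}: otherwise the final subgraph to be colored from the residual lists is the cycle together with its chords, and $2$-choosability of $C_{2k}$ says nothing about it. And an induced even cycle need not exist: $K_5$ minus an edge is $2$-connected, neither complete nor a cycle, yet its only induced cycles are triangles. The alternative you name (an orientation with no source and no odd directed cycle plus Alon--Tarsi) is likewise only asserted, not constructed, and the stated hypotheses are not the ones the kernel method actually needs. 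The classical repair is Rubin's lemma: every $2$-connected graph that is neither complete nor a cycle contains, as an \emph{induced} subgraph, either an even cycle or a cycle with exactly one chord (a theta graph), and one verifies directly that each of these seeds can be colored from the lists left over after the greedy phase. That verification, together with the existence of such an induced seed, is precisely the content you defer to ``the bulk of the work,'' so as written the forward direction is not proved.
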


Furthermore,
it is known that 
complete graphs and odd cycles
do not have an $L$-coloring
for a degree-list assignment $L$
only when 
all vertices have same list assignment
of size exactly their degree.

In order to consider some problems on list chromatic number,
Dvo\v{r}\'{a}k and Postle \cite{DP} considered 
a generalization of a list-coloring.
They call it a \textit{correspondence coloring},
but we call it a \textit{DP-coloring},
following Bernshteyn, Kostochka and Pron \cite{BKP}.
It was first proposed for a simple graph,
and then extended to a multigraph in \cite{BKP}.

Let $G$ be a multigraph (possibly having multiple edges but no loops)
and $L$ be a list assignment of $G$.
For each pair of vertices $u$ and $v$ in $G$,
let $M_{L,uv}$ be the union of $\mu_G(uv)$ matchings
between $\{u\} \times L(u)$ and $\{v\} \times L(v)$,
where $\mu_G(uv)$ is the multiplicity of $uv$ in $G$.
Note that if $u$ and $v$ are not connected by an edge in $G$,
then $\mu_G(uv)= 0$ and $M_{L,uv}$ is an empty set.
With abuse of notation,
we sometimes regard $M_{L,uv}$ 
as a bipartite graph
between $\{u\} \times L(u)$ and $\{v\} \times L(v)$
of maximum degree at most $\mu_G(uv)$.

Let $\M_{L} = \big\{M_{L,uv} : uv \in E(G) \big\}$,
which is called a \textit{matching assignment over $L$}.
Then a graph $H$ is said to be the \textit{$\M_{L}$-cover} of $G$
if it satisfies all the following conditions:
\begin{enumerate}[{\upshape (i)}]
\item
The vertex set of $H$ is 
$\bigcup_{u \in V(G)} \big(\{u\} \times L(u)\big) 
= \big\{(u,c): u \in V(G), \ c \in L(u)\big\}$.
\item
For any $u \in V(G)$,
the set $\{u\} \times L(u)$ is a clique in $H$.
\item
For any two vertices $u$ and $v$ in $G$,
$\{u\} \times L(u)$
and 
$\{v\} \times L(v)$
induce in $H$ the graph obtained from $M_{L,uv}$
by adding those edges defined in (ii).
\end{enumerate}

(See Figure \ref{example_fig} for an example.)
\bigskip

\begin{figure}
\centering
\input{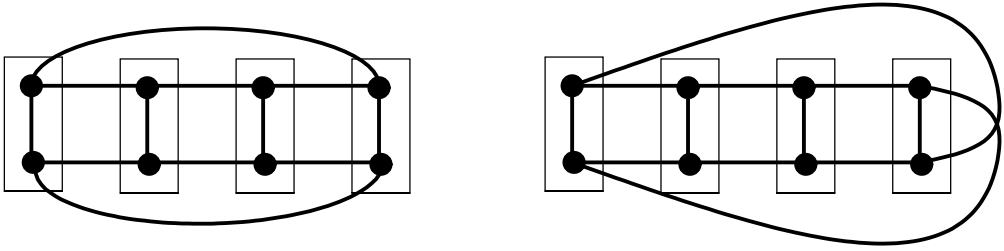}
\caption{Two examples of the $\M_L$-cover of $C_4$
such that $|L(u)| = 2$ for any vertex $u$.
Each thin rectangle represents $\{u\} \times L(u)$
for some vertex $u$.
In fact,
the cycle $C_4$ admits an $\M_L$-coloring for the left,
while does not for the right.}
\label{example_fig}
\end{figure}

An $\M_{L}$-coloring of $G$ is an independent set $I$ in 
the $\M_{L}$-cover with $ |I| = |V(G)|$.
The \textit{DP-chromatic number},
denoted by $\chi_{\text{DP}}(G)$,
is the minimum integer $t$
such that $G$ admits an $\M_{L}$-coloring
for each $t$-list assignment $L$ and each matching assignment $\M_{L}$ over $L$.
%

Note that 
when $G$ is a simple graph and 
$$M_{L,uv} = \big\{(u,c)(v,c): c \in L(u) \cap L(v)
\big\}
$$
for any edge $uv$ in $G$,
then $G$ admits an $L$-coloring
if and only if 
$G$ admits an $\M_{L}$-coloring.
Furthermore,
when $L(u) = [k]$ for each $u \in V(G)$
(that is, ~when we consider an ordinal $k$-coloring),
then the $\M_L$-cover of $G$ 
is isomorphic to the graph $G \square K_k$,
which is the \textit{Cartesian product} of $G$ and the complete graph $K_k$.
Recall that the Cartesian product $H_1 \square H_2$
of graphs $H_1$ and $H_2$
is the graph with $V(H_1 \square H_2) = V(H_1) \times V(H_2)$
and two vertices $(u_1,u_2)$ and $(v_1,v_2)$ are adjacent
if and only if 
either $u_1 = v_1$ and $u_2v_2 \in E(H_2)$
or $u_1 v_1 \in E(H_1)$ and $u_2 = v_2$.
In this case,
we see that
$G$ admits a $k$-coloring
if and only if $G \square K_k$ contains
an independent set of size $|V(G)|$.
According to \cite{BK},
this fact was pointed out by Plesnevi\v{c} and Vizing \cite{PV}.

The relation in the previous paragraph
implies $\chi_{\ell}(G) \leq \chi_{\text{DP}}(G)$.
There are infinitely many simple graphs $G$ 
satisfying $\chi_{\ell} (G) < \chi_{\text{DP}}(G)$:
As we will see in Theorems \ref{BERT} and \ref{mainthm},
$\chi(C_n) = \chi_{\ell}(C_n) = 2 < 3 = \chi_{\text{DP}}(C_n)$
for an even integer $n$.
Furthermore,
the gap $\chi_{\text{DP}}(G) - \chi_{\ell} (G)$
can be arbitrary large.
For example,
Bernshteyn \cite{Bernshteyn} showed that 
for a simple graph $G$ with average degree $d$,
we have $\chi_{\text{DP}}(G) = \Omega(d / \log d)$,
while Alon \cite{Alon} proved 
that $\chi_{\ell}(G) = \Omega(\log d)$ and 
the bound is sharp.
See \cite{BK2} for more detailed results.
Recently, there are some works on DP-colorings;
see \cite{Bernshteyn, BK, BKZ, DP}.

Bernshteyn, Kostochka and Pron \cite{BKP} proved 
a Brooks' type theorem for DP-coloring of multigraphs.
For a multigraph $G$ and an integer $t$,
we denote by $G^t$ the multigraph obtained from $G$ 
by replacing each edge with a set of $t$ multiples edges.
A multigraph $G$ is said to be \textit{degree-DP-colorable}
if $G$ admits an $\M_{L}$-coloring
for each degree-list assignment $L$ and 
each matching assignment $\M_{L}$ over $L$.

The following theorem gives
a Brooks' type theorem for a DP-coloring.
This is an extension of Theorem \ref{BERT}.

\begin{thm}[Bernshteyn, Kostochka and Pron \cite{BKP}]
\label{BKPthm}
A connected multigraph $G$ is not degree-DP-colorable
if and only 
each block of $G$ is $K_n^t$ or $C_n^t$
for some $n$ and $t$.
\end{thm}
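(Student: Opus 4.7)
For the \emph{if} direction, I would exhibit, for each block of the prescribed form, a degree-list assignment $L$ together with a matching assignment $\M_L$ whose $\M_L$-cover has no independent set of size equal to the number of vertices, and then glue these obstructions along cut vertices as in the classical proof of Theorem \ref{BERT}. For $K_n^t$, set $L(v)=[(n-1)t]$ for every vertex and let each $M_{L,uv}$ be the union of $t$ identity matchings; then the $\M_L$-cover is $K_n\,\square\,K_{(n-1)t}$, whose independent sets of size $n$ correspond to proper $(n-1)t$-colorings of $K_n$, of which there are none. For $C_n^t$ with $V(C_n)=v_1\cdots v_n$, set $L(v_i)=[2t]$, use identity matchings along $v_1v_2,\ldots,v_{n-1}v_n$, and on $v_nv_1$ install $t$ matchings whose composition around the cycle is a fixed-point-free permutation of $[2t]$. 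Block-wise obstructions can then be combined over the block-cut tree since at each cut vertex every block forces its cut vertex into a single color slot (modulo the matchings), and these forced colors are made consistent by relabeling.

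The \emph{only if} direction is the main content. I would argue by induction on $|V(G)|$: assuming some block of $G$ is not of the special form, produce an $\M_L$-coloring for every degree-list assignment $L$ and every matching assignment $\M_L$. If $G$ has a cut vertex, pick a leaf block $B$ with cut vertex $v$; then either $B$ or some block of $G-(V(B)\setminus\{v\})$ fails to be of the special form, and I would color the offending side first by induction and extend through $v$ to the other side, exploiting that $d_G(v)$ strictly exceeds $v$'s degree in the side being extended (this requires a small strengthening of the inductive statement to allow prescribing the color of one specified vertex). The core case is therefore: every $2$-connected multigraph $G\neq K_n^t, C_n^t$ is degree-DP-colorable. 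The key structural lemma is the existence of vertices $u,v,w$ with $uw,vw\in E(G)$, $uv\notin E(G)$, and $G-\{u,v\}$ connected; when the underlying simple graph of $G$ is not $K_n$ nor an odd or even cycle, this is the standard Brooks-type tool (cf.\ \cite{KSW}), and when the underlying simple graph is $K_n$ or $C_n$ but multiplicities are non-uniform, one picks $u,v$ incident to an edge of smaller multiplicity and verifies the property directly.

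Given such a triple, order the vertices $u=v_1, v=v_2, v_3, \ldots, v_{n-1}=w, v_n$ so that each $v_i$ for $3\le i\le n-1$ has a later neighbor in the order (reverse DFS from $w$ in $G-\{u,v\}$), and color greedily: each intermediate $v_i$ has at most $d_G(v_i)-1$ previously colored neighbors, whose matching constraints block strictly fewer than $|L(v_i)|=d_G(v_i)$ colors, leaving a free choice. The principal obstacle is the very first step: one must pick $(u,c_u)$ and $(v,c_v)$ so that, after the intermediate greedy choices, the final vertex $w$ still admits a free color. In the DP setting this is subtler than in ordinary list-coloring because the constraints at $w$ coming from $u$ and $v$ are mediated by arbitrary matchings $M_{L,uw}$ and $M_{L,vw}$, so one cannot appeal to a shared color; instead, a double-counting using $|L(u)|,|L(v)|,|L(w)|,\mu_G(uw)$, and $\mu_G(vw)$ shows that a pair $(c_u,c_v)$ whose $M_{L,uw}$- and $M_{L,vw}$-neighborhoods in $\{w\}\times L(w)$ overlap always exists, except when the local structure at $\{u,v,w\}$ is rigidly that of $K_n^t$ or $C_n^t$, which is excluded by hypothesis. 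Making this counting tight in the presence of arbitrary matchings and arbitrary multiplicities is where the real work of the proof lies.
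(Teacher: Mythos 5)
First, note that the paper does not prove Theorem \ref{BKPthm} at all: it quotes the result from \cite{BKP} and uses it as an ingredient in the proof of Theorem \ref{mainthm}, so there is no in-paper proof to compare against. Judged on its own, your proposal has a concrete error in the ``if'' direction. For $K_n^t$ you take $L(v)=[(n-1)t]$ and let every $M_{L,uv}$ be ``the union of $t$ identity matchings''; as a set of edges this is just the single identity matching, so the $\M_L$-cover is $K_n\,\square\,K_{(n-1)t}$, and an independent set of size $n$ is exactly a proper $(n-1)t$-coloring of $K_n$. For $t\ge 2$ we have $(n-1)t\ge n$, so such a coloring exists and your cover is \emph{not} an obstruction. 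The correct obstruction (recorded in condition (II) of Theorem \ref{mainthm}) partitions each list into $n-1$ groups of size $t$ and joins corresponding groups completely, producing $H(n,t)$; an independent set would then yield a proper $(n-1)$-coloring of $K_n$, which does not exist. The cycle construction has the same flaw for $t\ge 2$ (with $2t\ge 4$ colors and only one forbidden color per neighbour, a greedy choice around the cycle succeeds), and in addition the uniform prescription of a fixed-point-free closing permutation is wrong for odd $n$ even at $t=1$: there the obstruction is the untwisted $t$-fat ladder, while the twisted (M\"{o}bius) cover of odd length \emph{does} admit an independent set of size $n$. The parity of $n$ must govern whether the groups are crossed at the closing edge, as in (III) versus (IV) of Theorem \ref{mainthm}.

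For the ``only if'' direction your outline follows the classical Brooks/degree-choosability template, which is the right skeleton, but the two places you flag as ``the real work'' are exactly where the argument is not yet a proof. The triple $u,v,w$ with $uv\notin E(G)$ does not exist when the underlying simple graph is complete, which is precisely the multigraph case ($K_n$ with non-uniform multiplicities) that constitutes the new content of \cite{BKP} over Theorem \ref{BERT}; saying one ``verifies the property directly'' for an edge of smaller multiplicity does not explain how the greedy scheme terminates at $w$ when $(u,c_u)$ and $(v,c_v)$ must in addition be chosen non-adjacent in the cover. Likewise the claimed double counting producing a pair $(c_u,c_v)$ with overlapping forbidden sets at $w$ ``except when the local structure is rigidly $K_n^t$ or $C_n^t$'' is asserted rather than proved, and it is the heart of the matter: local non-rigidity at one triple does not obviously propagate to a global coloring. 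So the proposal identifies a plausible strategy but leaves the decisive steps open, and its explicit constructions for the converse are incorrect for $t\ge 2$ and for odd cycles.
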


However,
Theorem \ref{BKPthm}
does not explain the $M_L$-colorability 
for a matching assignment $\M_L$
when every block of $G$ is $K_n^t$ or $C_n^t$
for some $n$ and $t$.
The purpose of this paper is to make this point clear
and give the complete structure.
As explained in the next subsection,
it is important work because of the relation to
some results on signed colorings of signed graphs.

\subsection{Singed colorings of signed graphs}

Here, we give a relationship to signed colorings of signed graphs.
A \textit{signed graph} $(G,\sigma)$
is a pair of a multigraph $G$ 
and a mapping $\sigma: E(G) \rightarrow \{1, -1\}$,
which is called a \textit{sign}.
For an integer $k$,
let 
$$N_k = 
\begin{cases}
\{0, \pm 1, \dots , \pm r\} & \text{if $k$ is an odd integer with $k = 2r +1$,} \\
\{\pm 1, \dots , \pm r\} & \text{if $k$ is an even integer with $k = 2r$.}
\end{cases}
$$
A \textit{signed $k$-coloring} of a signed graph $(G,\sigma)$
is a mapping $f : V(G) \rightarrow N_k$
such that $f(u) \not= \sigma(uv) f(v)$
for each $uv \in E(G)$.
The minimum integer $k$
such that a signed graph $(G,\sigma)$ admits a signed $k$-coloring
is called the \textit{signed chromatic number of $(G,\sigma)$}.
This was first defined by Zaslavsky \cite{Zaslavsky}
with slightly different form,
and then modified by M\'{a}\v{c}ajov\'{a}, Raspaud, and \v{S}koviera \cite{MRS}
to the above form
so that it would be a natural extension of an ordinally coloring.

We here point out that
a signed coloring of a signed graph $(G,\sigma)$
is a special case of a DP-coloring of $G$.
Let $L$ be the list assignment of $G$
with $L(u) = N_k$ for any vertex $u$ in $G$.
Then for an edge $uv$ in $G$,
let 
$$M_{L,uv} = 
\begin{cases}
\big\{(u,i)(v,i) : i \in N_k\big\} & \text{ if $\sigma(uv) = 1$,}\\
\big\{(u,i)(v,-i) : i \in N_k\big\} & \text{ if $\sigma(uv) = -1$.}
\end{cases}
$$
With this definition,
it is easy to see that
the signed graph $(G,\sigma)$ admits a signed $k$-coloring
if and only if 
the multigraph $G$ admits an $\M_{L}$-coloring.

A Brooks' type theorem 
for a signed coloring was proven 
by M\'{a}\v{c}ajov\'{a}, Raspaud, and \v{S}koviera \cite{MRS}.
Later, Fleiner and Wiener \cite{FW} gave a short proof,
using a DFS tree.

For a signed graph $(G,\sigma)$
and a mapping $L$ from $V(G)$ to $N_k$,
a \textit{signed $L$-coloring} is a signed coloring $f$ of $G$
such that $f(u) \in L(u)$ for each $u \in V(G)$.
Some results on signed $L$-coloring are showed in \cite{JKS, SS}.
In particular, 
Schweser and Stiebitz \cite{SS} gave 
a Brooks' type theorem for signed list-colorings.
In order to explain the exact statement,
we here introduce several definitions.

Let $(G,\sigma)$ be a signed graph.
A \textit{switching} at a vertex $v$
is defined as reversing the signs 
of all edges incident to $v$.
It is not difficult to see that 
a switching at any vertex does not change
the signed chromatic number of $(G,\sigma)$.
Note that 
a switching at $v$ in the sense 
of a signed $L$-coloring
corresponds to taking the mapping with $i \mapsto -i$
on $L(v)$.
Two signed graphs or 
two signs of a multigraph
are \textit{signed-equivalent}
or simply \textit{equivalent} 
if one is obtained from the other 
by a sequence of switchings.
A signed graph $(G,\sigma)$ is \textit{balanced}
if $\sigma$ is equivalent 
to the sign with all edges positive;
otherwise,
$(G,\sigma)$ is \textit{unbalanced}.
For a simple graph $G$,
a signed graph $(G^2, \sigma)$ is \textit{full}
if parallel edges with same end vertices have different signs on $\sigma$.

Then we are ready to state 
a Brooks' type theorem 
for a signed coloring.
This is an extension of Theorem \ref{BERT}.

\if0

\begin{thm}[M\'{a}\v{c}ajov\'{a}, Raspaud, and \v{S}koviera \cite{MRS}]
\label{MRSthm}
Let $(G,\sigma)$ be a signed graph,
where $G$ is connected,
and let $k$ be the maximum degree of $G$.
Then 
$(G,\sigma)$ does not admit a signed $k$-coloring
if and only if $(G,\sigma)$ is equivalent to 
either balanced $K_n$ for some integer $n$,
or balanced $C_n$ for some odd integer $n$,
or unbalanced $C_n$ for some even integer $n$.
\end{thm}
\fi

\begin{thm}[Schweser and Stiebitz \cite{SS}]
\label{SSthm}
Let $(G,\sigma)$ be a signed graph,
where $G$ is connected,
and let $L$ be a mapping from $V(G)$ to $N_k$
with $|L(u)| \geq d_G(u)$ for each $u \in V(G)$.
Then 
$(G,\sigma)$ does not admit a signed $L$-coloring
if and only if each block of $(G,\sigma)$ is 
one of the following:
\begin{itemize}
\item A balanced $K_n$ for some integer $n$.
\item A balanced $C_n$ for some odd integer $n$.
\item An unbalanced $C_n$ for some even integer $n$.
\item A full $K_n^2$ for some integer $n$.
\item A full $C_n^2$ for some odd integer $n$.
\end{itemize}
\end{thm}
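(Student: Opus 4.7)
The plan is to deduce Theorem \ref{SSthm} from Theorem \ref{BKPthm} combined with an analysis of which matching assignments can arise from signed structures. Given $(G,\sigma)$ and $L$ with $|L(u)| \geq d_G(u)$, form the matching assignment $\M_L$ as in the previous subsection: a positive edge $uv$ contributes the identity matching $\{(u,i)(v,i) : i \in L(u) \cap L(v)\}$ and a negative edge contributes the negation matching $\{(u,i)(v,-i) : i \in L(u),\ -i \in L(v)\}$. Signed $L$-colorings of $(G,\sigma)$ then correspond bijectively to $\M_L$-colorings of $G$, so if $(G,\sigma)$ admits no signed $L$-coloring, Theorem \ref{BKPthm} forces each block of $G$ to be $K_n^t$ or $C_n^t$ for some $n,t$.

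The next step is to reduce the multiplicity. Between any two adjacent vertices, edges of the same sign contribute identical matchings to $\M_L$, and collapsing such duplicates does not affect $\M_L$-colorability (only the degree and hence the required list size). So the effective multiplicity is at most $2$, with $t = 2$ corresponding exactly to the pair having both a positive and a negative edge --- the \emph{full} case. Hence the blocks to consider reduce to $K_n$, $C_n$, full $K_n^2$, and full $C_n^2$.

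The block-level analysis then pins down balance and parity. The proof of Theorem \ref{BKPthm} implicitly supplies a cycle-composition criterion for uncolorability: in each cycle of the block, composing the matchings along the cycle must yield the identity permutation on the list. For a $K_n$ block this consistency, together with switching at each vertex (which on the list side corresponds to the involution $i \mapsto -i$), normalizes $\sigma$ to all-positive, giving balanced $K_n$. For a $C_n$ block the composition of identities and negations around the cycle is the identity iff the cycle has an even number of negations; combined with the usual odd/even degree-choosability distinction this yields balanced odd $C_n$ and unbalanced even $C_n$. For the full $K_n^2$ block the two matchings at each pair are forced to be identity and negation, and the cover analysis gives uncolorability for every $n$. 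For the full $C_n^2$ block the analogous computation, now tracking the composition of pairs of matchings around the cycle, forces $n$ to be odd. The sufficiency direction is then handled by exhibiting an explicit $L \subseteq N_k$ with $|L(u)| = d_G(u)$ for each listed block type and directly checking that the $\M_L$-cover has no independent set of size $|V(G)|$.

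The main obstacle is the case analysis for $t = 2$: one must carefully show that the identity-and-negation matching around a $C_n^2$ forces odd $n$, and rule out that any $K_n^t$ or $C_n^t$ with $t \geq 2$ other than the full $t = 2$ case can arise from a sign assignment after duplicate collapse. This requires tracking how the involution $i \mapsto -i$ on $N_k$ interacts with the list structure and with switching equivalence, and is the most delicate ingredient of the argument.
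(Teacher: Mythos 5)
Your reduction of signed $L$-colorings to $\M_{L}$-colorings, and your observation that the effective multiplicity is at most $2$ (two parallel edges of the same sign give identical matchings, so collapsing them leaves a vertex whose list exceeds its degree in the collapsed graph, and a greedy argument in the spirit of Lemma \ref{inductionlemma} then yields a coloring, so $t=2$ must be \emph{full}), are both correct and consistent with how this paper frames the problem. Note, however, that the paper does not prove Theorem \ref{SSthm} at all: it is quoted from \cite{SS}, and is meant to be recovered by specializing Theorem \ref{mainthm} to the matching assignments coming from a sign. So you are attempting a route the paper deliberately avoids, and that is where the gap appears.

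The gap is your appeal to a ``cycle-composition criterion'' that the proof of Theorem \ref{BKPthm} ``implicitly supplies.'' Theorem \ref{BKPthm} as stated only identifies the underlying multigraph of each block as $K_n^t$ or $C_n^t$; it says nothing about the structure of the $\M_L$-cover. Yet your balance and parity conclusions need exactly that structure: that the cover of a $K_n^t$ block must be $H(n,t)$ (whence every triangle is balanced and $\sigma$ normalizes to all-positive), and that the cover of a $C_n^t$ block must be a $t$-fat ladder for odd $n$ and a $t$-fat M\"obius ladder for even $n$ (whence balanced odd $C_n$, unbalanced even $C_n$, and --- since the identity-plus-negation join is always side-preserving on the pairs $\{i,-i\}$, so a full $C_n^2$ can only produce a straight fat ladder, never a M\"obius one --- odd $n$ for full $C_n^2$). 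That structural statement is precisely conditions (II)--(IV) of Theorem \ref{mainthm}, i.e.\ the main contribution of this paper, which the authors emphasize is \emph{not} specified in \cite{BKP}. So your argument tacitly presupposes Theorem \ref{mainthm} (or an independent proof of its signed special case, which is what \cite{SS} actually supplies) rather than deriving Theorem \ref{SSthm} from Theorem \ref{BKPthm}. Two further points you leave untreated: how the lists split among blocks at a cut vertex (condition (I)), which is needed before any per-block analysis applies; and the ``if'' direction, which in the stated form concerns the \emph{given} $L$ and therefore cannot be discharged merely by exhibiting one convenient list assignment --- the admissible list structure (after switching, unions of pairs $\{i,-i\}$ of the correct size) is part of what must be pinned down.
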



\subsection{Degree-DP-colorable graphs}

In the previous subsections,
we have seen two Brooks' type theorems,
namely that for DP-coloring and for signed coloring.
As we have seen,
a signed coloring can be regarded as a special case of a DP-coloring.
Roughly speaking,
Theorem \ref{BKPthm} is an ``almost'' improvement of Theorem \ref{SSthm}.
However,
when there is no desired coloring,
Theorem \ref{SSthm} completely determines 
which signs and list assignments forbid to have a desired coloring,
while Theorem \ref{BKPthm} only gives a structure of graphs.
Motivated by this situation,
we improve Theorem \ref{BKPthm} so that 
it completely covers Theorem \ref{SSthm}.

\medskip

Before explaining the exact statement of our main theorem,
we define three special graphs.
For two graphs $G$ and $H$ and a vertex $u$ of $G$,
\textit{blowing up $u$ to $H$}
is the operation 
to replace $u$ by $H$ so that
each vertex of $H$ is joined to every neighbor of $u$ in $G$.
Let $n$ and $t$ be positive integers.

\begin{itemize}
\item
The graph $H(n,t)$ 
is defined 
such that 
$\big\{(i,j,k) : i \in [n], \ j \in [n-1], \ k \in [t] \big\}$ 
is the vertex set and 
$(i,j,k)$ and $(i', j', k')$ are adjacent
if and only if
either
$i = i'$ or
$j = j'$.
See Figure \ref{H_nt_fig}.
Note that the graph $H(n,1)$ 
is isomorphic to 
$K_{n} \square K_{n-1}$,
and $H(n,t)$ is obtained from $H(n,1)$
by blowing up each vertex to a complete graph $K_t$.

\item
A graph $H$ is the \textit{$t$-fat ladder of length $n$} 
if
$\big\{(i,j,k) : i \in [n], \ j \in \{1,2\}, \ k \in [t] \big\}$ 
is the vertex set 
and 
$(i,j,k)$ and $(i', j', k')$ are adjacent
if and only if
either $i=i'$,
or $i' = i +1$ and $j = j'$,
where we define $n+1$ as $1$ for the subscript $i$.
In other words,
the $t$-fat ladder of length $n$
is obtained from the ladder of length $n$ (i.e.~$C_n \square K_2$)
by blowing up each vertex to a complete graph $K_t$.
The left of Figure \ref{example_fig} is a $1$-fat ladder
of length $4$.

\item
A graph $H$ is the \textit{$t$-fat M\"{o}bius ladder of length $n$} 
if
$\big\{(i,j,k) : i \in [n], \ j \in \{1, 2\}, \ k \in [ t ] \big\}$ 
is the vertex set 
and 
$(i,j,k)$ and $(i', j', k')$ are adjacent
if and only if
either 
\begin{itemize}
\item
$i=i'$, or 
\item
$i' = i +1$ and $j = j'$ for $1 \leq i \leq n-1$, or
\item
$i = n$, $i' = 1$ and $j \not= j'$.
\end{itemize}
In other words,
the $t$-fat M\"{o}bius ladder of length $n$
is obtained from the M\"{o}bius ladder of $2n$ vertices
by blowing up each vertex to a complete graph $K_t$.
The right of Figure \ref{example_fig} is a $1$-fat M\"{o}bius ladder
of length $4$.
\end{itemize}

\begin{figure}
\centering
\input{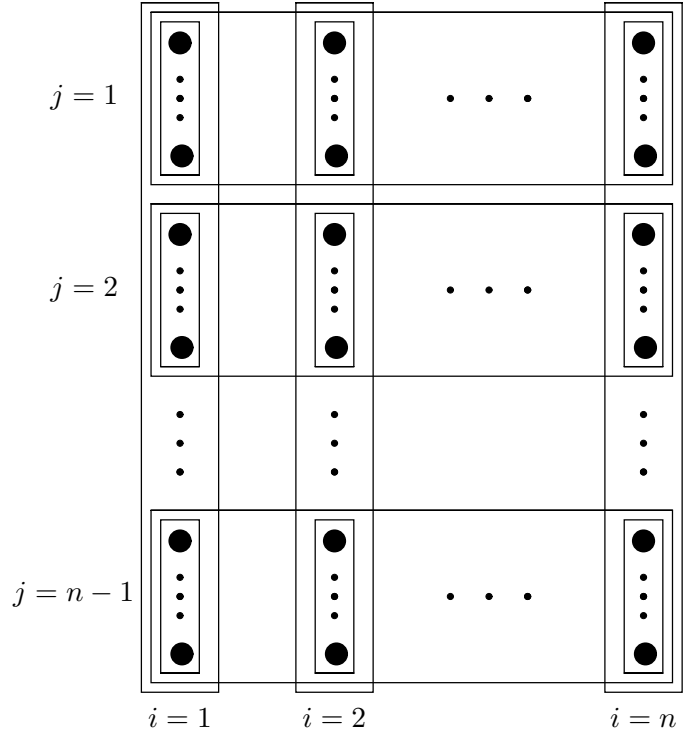}
\caption{The graph $H(n,t)$.
Each thin rectangle represents a clique.
In particular,
every minimal thin rectangle corresponds to $\big\{(i,j,k) : k \in [ t ] \big\}$ for some $i \in [n]$ and $j \in [n-1]$,
and contains exactly $t$ vertices.}
\label{H_nt_fig}
\end{figure}

For a vertex $u$ in a graph $G$ and a list assignment $L$ of $G$,
we denote $\{u\} \times L(u)$ by $\widetilde{L}(u)$
for simplicity.
Similarly, 
$\widetilde{L'}(u)$ denotes $\{u\} \times L'(u)$
for $L'(u) \subseteq L(u)$.

\medskip
Now we are ready to state our main theorem.

\begin{thm}
\label{mainthm}
Let $G$ be a connected multigraph, $L$ be a degree-list assignment of $G$,
and $\M_{L}$ be a matching assignment over $L$.
Then $G$ does not admit an $\M_{L}$-coloring
if and only if
each block of $G$ is isomorphic to
$K_n^t$ or $C_n^t$
for some integers $n$ and $t$
such that 
all of the following hold.
\begin{enumerate}[{\upshape (I)}]
\item
For each vertex $u$ in $G$,
the list assignment $L(u)$ has a partition 
$$\big\{L_B(u) : \text{$B$ is a block of $G$ containing $u$}\big\}$$ 
such that for any block $B$ containing $u$,
$$|L_B(u)| = 
\begin{cases}
t(n-1) & \text{if $B$ is isomorphic to $K_n^t$}, \\
2t & \text{if $B$ is isomorphic to $C_n^t$}.
\end{cases}
$$
\item
If $B$ is a block of $G$ isomorphic to $K_n^t$ 
for some integers $n$ and $t$,
then
$\bigcup_{u \in V(B)} \widetilde{L}_B(u)$
induces 
the graph $H(n,t)$ in the $\M_L$-cover of $G$,
where each set $\widetilde{L}_B(u)$ corresponds to
$\big\{(i_u,j,k) : j \in [n-1],\  k \in  [ t ]\big\}$
for some $i_u \in [n]$.
\item
If $B$ is a block of $G$ isomorphic to $C_n^t$ 
for some integers $n$ and $t$ with $n$ odd,
then 
$\bigcup_{u \in V(B)} \widetilde{L}_B(u)$
induces a $t$-fat ladder of length $n$ in the $\M_L$-cover of $G$,
where each set $\widetilde{L}_B(u)$ corresponds to
$\big\{(i_u,j,k) : j \in \{1,2\},\  k \in  [ t ]\big\}$
for some $i_u \in [n]$.
\item
If $B$ is a block of $G$ isomorphic to $C_n^t$ 
for some integers $n$ and $t$ with $n$ even,
then 
$\bigcup_{u \in V(B)} \widetilde{L}_B(u)$
induces a $t$-fat M\"{o}bius ladder of length $n$ in the $\M_L$-cover of $G$,
where each set $\widetilde{L}_B(u)$ corresponds to
$\big\{(i_u,j,k) : j \in \{1,2\},\  k \in  [ t ]\big\}$
for some $i_u \in [n]$.
\end{enumerate}

\end{thm}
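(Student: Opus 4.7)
The plan is to prove both implications by induction on the number of blocks of $G$, with the base case being a $2$-connected graph (a single block). Throughout I treat an ``$\M_L$-coloring'' as an independent set of size $|V(G)|$ in the $\M_L$-cover.

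Sufficiency ($\Leftarrow$). For a single block $B$, I verify directly that $H(n,t)$, the $t$-fat ladder of odd length $n$, and the $t$-fat M\"obius ladder of even length $n$ contain no independent set meeting every column $\widetilde{L}_B(u)$. In $H(n,t)$, such a set chooses one $(i,j_i,k_i)$ per $i\in[n]$ with all $j_i$ distinct (since $j=j'$ gives an edge), contradicting $j_i\in[n-1]$ by pigeonhole. In the odd fat ladder the $j_i$'s would give a proper $2$-coloring of $C_n$; in the even M\"obius ladder the twist at $n\to 1$ demands $j_n=j_1$ against the alternation enforced by the other edges. For the inductive step, let $B$ be a leaf block with cut vertex $u$, and set $G'=G-(V(B)\setminus\{u\})$ with list $L'(u)=L(u)\setminus L_B(u)$ (and $L'(v)=L(v)$ otherwise) and the restricted matching assignment. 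Conditions (I)--(IV) transfer to $G'$, so by induction $G'$ has no $\M_{L'}$-coloring; this rules out $\M_L$-colorings of $G$ with $c_u\notin L_B(u)$, while the base case applied inside $B$ rules out $c_u\in L_B(u)$.

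Necessity ($\Rightarrow$). By Theorem \ref{BKPthm}, each block is $K_n^t$ or $C_n^t$. The heart of this direction is a local lemma: for a single such block $B$ with degree-list $L$ and matching $\M_L$ admitting no $\M_L$-coloring, one has $|L(u)|=d_B(u)$ and the $\M_L$-cover of $B$ is, after relabeling, exactly $H(n,t)$ or the appropriate $t$-fat (M\"obius) ladder. I would fix $u_0\in V(B)$, label $\widetilde{L}(u_0)$ by $\{u_0\}\times([n-1]\times[t])$ (respectively $\{u_0\}\times(\{1,2\}\times[t])$), and transport these labels via the matchings $M_{L,u_0v}$. The remaining matchings $M_{L,uv}$ with $u,v\neq u_0$ must then be ``identity on the $(j,k)$-coordinate'' by contradiction: any deviation yields an explicit size-$|V(B)|$ independent set in the cover, generalizing the $K_3$ computation in which a single ``swap'' matching alongside two ``identity'' matchings already produces an independent triple. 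In the cycle case, the same transport runs around the cycle and the cumulative twist between $i=n$ and $i=1$ is either $0$ or a single swap; this is precisely the fat-ladder/M\"obius-ladder dichotomy, with the parity of $n$ determining which signature is compatible with having no coloring. To globalize, apply the local lemma at a leaf block $B$ with cut vertex $u$, define $L_B(u)\subseteq L(u)$ as the colors $c$ at $u$ for which no extension into $B$ exists, show $|L_B(u)|=d_B(u)$, and peel off to $G'=G-(V(B)\setminus\{u\})$ with list $L(u)\setminus L_B(u)$ at $u$ (which again admits no $\M$-coloring) to apply induction.

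The main obstacle is the local lemma, specifically the combinatorial case analysis that propagates a labeling from $u_0$ to all other vertices of $B$ and shows that any non-identity matching between a pair not involving $u_0$ forces an independent set of size $|V(B)|$. In the cycle case, the added subtlety is tracking the net twist around the cycle and matching it against the parity of $n$, since the labeling is only determined up to a global swap and the relevant invariant is genuinely global rather than local.
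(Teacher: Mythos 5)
Your overall architecture is sound and genuinely different from the paper's: you reduce necessity to a rigidity statement for a single $2$-connected block (your ``local lemma'') plus a standard Gallai-tree peeling argument over the block structure, and your sufficiency sketch for the three cover graphs (pigeonhole on the $j$-coordinates, parity of the $2$-colouring around the cycle) is correct, modulo the observation that an end block contains only one cut vertex so that $L(v)=L_B(v)$ for its other vertices. The paper instead inducts on $|V(G)|$, using the deletion lemma ($G-u$ with lists $L^{(u,c)}$ is uncolourable for every $c$) so that the induction hypothesis applied to $G-u$ already hands it the full cover structure of $B_0-u$; the rigidity of the matchings at $u$ is then extracted from that known structure (Claims~\ref{LjvLj'w-nbr} and~\ref{B_0-nbr} for $K_n^t$, and properties $(P1)$--$(P3)$ for $C_n^t$, where the cycle case additionally requires the device of adding $t$ parallel edges between $u_{n-1}$ and $u_1$ with a hand-picked matching so that the smaller graph is again a single cycle block).

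The genuine gap is that your local lemma is the entire mathematical content of the hard direction, and the plan you give for it does not yet work for $t>1$. ``Transporting a labelling of $\widetilde{L}(u_0)$ via the matchings'' presupposes two things that must themselves be proved: (a) that $L(u_0)$ admits a canonical partition into $t$-blocks at all, and (b) that each $M_{L,u_0v}$, which a priori is only a $t$-regular bipartite graph (regularity follows from uncolourability, but a union of $t$ matchings need not be a disjoint union of $K_{t,t}$'s --- for $t=2$ it can be a long even cycle), actually decomposes into complete bipartite $K_{t,t}$'s aligned with consistent partitions on both sides. Without (a) and (b) the transported labelling is not well defined, and the subsequent ``any non-identity matching yields an independent transversal'' step has nothing to act on. Note also that your induction on the number of blocks cannot be invoked to supply this structure, since the local lemma \emph{is} the base case; you would need a separate internal induction (on $n$ for $K_n^t$, and on the cycle length for $C_n^t$, where deleting a vertex shatters the cycle into $K_2^t$-blocks and the added-chord trick or something equivalent becomes necessary). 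This is precisely the reason the paper inducts on $|V(G)|$ rather than on the number of blocks. Your identification of the global twist invariant in the cycle case is the right idea, but it only becomes available after (a) and (b) are established.
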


We see that 
Theorem \ref{mainthm} is an extension of 
Theorems\ \ref{BERT}, \ref{BKPthm} and \ref{SSthm}.
Note that
$L_B$ in the ``only if'' part
is a degree-list assignment of $B$,
where $B$ is a block of $G$.
Furthermore,
we also see the following,
which will be used in our proof.
\begin{rem}
\label{degrem}
If each block of a graph $G$ is isomorphic to
$K_n^t$ or $C_n^t$
for some integers $n$ and $t$
and (I) holds for a list assignment $L$ of $G$,
then 
$|L(u)| = d_G(u)$ for each $u \in V(G)$.
\end{rem}

\section{Proof of Theorem \ref{mainthm}}

The proof of Theorem \ref{mainthm} uses a lemma,
which will be shown in the first subsection,
and then we give a proof of Theorem \ref{mainthm}.

\subsection{Greedy method to find an $\M_{L}$-coloring}
\label{greedysec}

As described in \cite{DP},
greedily choice of a color in $L(u)$
gives an $\M_{L}$-coloring 
for a $(k+1)$-list assignment $L$ of any $k$-degenerate graph $G$.
We use the same idea in this subsection
to obtain a useful lemma.

Let $G$ be a connected multigraph,
let $L$ be a list assignment of $G$,
and let $\M_{L}$ be a matching assignment over $L$.
For $u \in V(G)$ and $c \in L(u)$,
let $G^{(u)} := G - u$
and 
$$L^{(u,c)}(v) = L(v) - \big\{c' \in L(v): (u,c)(v,c') \in M_{L,uv} \big\}$$
for each $v \in V(G) - \{u\}$.
Note that the vertex $(u,c)$ has at most $\mu$ neighbors
in $\widetilde{L}(v)$
and $v$ lost $\mu$ edges from $G$ to $G^{(u)}$,
where $\mu$ is the multiplicity between $u$ and $v$ in $G$.
Thus,
if $L$ is a degree-list assignment of $G$,
then $L^{(u,c)}$ is a degree-list assignment of $G^{(u)}$.
We naturally denote by $\M_{L^{(u,c)}}$
the restriction of $\M_{L}$ into $G^{(u)}$ and $L^{(u,c)}$:
That is,
for each pair of vertices $v$ and $w$ in $G^{(u)}$,
$M_{L^{(u,c)},vw}$ is the union of matchings of $M_{L,vw}$
with end vertices contained in 
$\widetilde{L}^{(u,c)}(v)$
and $\widetilde{L}^{(u,c)}(w)$.
Let $\M_{L^{(u,c)}} = \big\{M_{L^{(u,c)},vw} : vw \in E\big(G^{(u)}\big) \big\}$.

Suppose that $G^{(u)}$ admits an $\M_{L^{(u,c)}}$-coloring,
that is,
there is an independent set $I_u$ in the $\M_{L^{(u,c)}}$-cover of $G$
with $|I_u| = |V(G^{(u)})| = |V(G)| -1$.
For each $(v,c_v)$ in $I_u$,
it follows from the choice of $L^{(u,c)}(v)$ that
$(v,c_v)$ is not a neighbor of $(u,c)$ in the $\M_L$-cover of $G$.
Therefore,
$I_u \cup \{(u,c)\}$ is an independent set 
in the $\M_L$-cover of $G^{(u)}$ with $|I| = |V(G)|$,
and hence $G$ admits an $\M_{L}$-coloring.
This gives the following lemma.

\begin{lemma}
\label{inductionlemma}
Let $G$ be a connected multigraph,
let $L$ be a list assignment of $G$,
and let $\M_{L}$ be a matching assignment over $L$.
For $u \in V(G)$ and $c \in L(u)$,
if $G^{(u)}$ admits an $\M_{L^{(u,c)}}$-coloring,
then $G$ admits an $\M_{L}$-coloring.
\end{lemma}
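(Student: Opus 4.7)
The plan is to directly extend a given $\M_{L^{(u,c)}}$-coloring of $G^{(u)}$ to an $\M_{L}$-coloring of $G$ by appending the vertex $(u,c)$ to the independent set. Concretely, let $I_u$ be an independent set of size $|V(G)|-1$ in the $\M_{L^{(u,c)}}$-cover of $G^{(u)}$, and set $I := I_u \cup \{(u,c)\}$. Then $|I| = |V(G)|$, so it suffices to verify that $I$ is independent in the $\M_L$-cover of $G$; once this is done, $I$ is by definition an $\M_L$-coloring of $G$.

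The verification splits into two types of non-edges to check. First, for each $(v,c_v) \in I_u$, I would show $(u,c)(v,c_v)$ is not an edge of the $\M_L$-cover. Since $v \neq u$, no clique edge arises, so the check reduces to $(u,c)(v,c_v) \notin M_{L,uv}$. But by the very definition of $L^{(u,c)}(v)$, the condition $c_v \in L^{(u,c)}(v)$ means precisely that $(u,c)(v,c_v)$ was removed from consideration, i.e.\ does not lie in $M_{L,uv}$. Second, for any distinct pair $(v,c_v), (w,c_w) \in I_u$, independence in the $\M_{L^{(u,c)}}$-cover must translate into independence in the $\M_L$-cover. This holds because, for $v,w \neq u$, the clique edges inside $\widetilde{L}(v)$ and $\widetilde{L}(w)$ are identical in both covers, and by construction $M_{L^{(u,c)},vw}$ consists of exactly those edges of $M_{L,vw}$ whose endpoints lie in $\widetilde{L}^{(u,c)}(v)\cup \widetilde{L}^{(u,c)}(w)$; so any hypothetical $\M_L$-edge between $(v,c_v)$ and $(w,c_w)$ would already have appeared in the smaller cover.

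I do not anticipate a genuine obstacle: the lemma is really a bookkeeping formalization of the standard greedy extension idea mentioned at the start of the subsection, with the only subtlety being to carefully unwind the definitions of $L^{(u,c)}$ and $\M_{L^{(u,c)}}$ so as to match the edge set of the restricted cover against that of the original $\M_L$-cover. The statement will then be applied repeatedly in the proof of Theorem~\ref{mainthm} as an inductive device: choosing a well-placed vertex $u$ and color $c$ reduces the problem to $G-u$ equipped with a degree-list assignment, which is the key mechanism behind Brooks-type arguments in this setting.
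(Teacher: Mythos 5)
Your proposal is correct and follows essentially the same argument as the paper: extend the independent set $I_u$ by $(u,c)$, noting that the definition of $L^{(u,c)}(v)$ excludes exactly the colors matched to $(u,c)$, so no new adjacency arises. Your extra check that pairs within $I_u$ stay independent is a detail the paper leaves implicit (the restricted cover is an induced subgraph of the original cover), but the route is the same.
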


\subsection{Proof of Theorem \ref{mainthm}}

It is not difficult to check the ``if part''
(see \cite[Lemmas 7 and 8]{BKP}),
and hence we show the ``only if part'' by induction on $|V(G)|$.
Suppose that $G$ does not admit an $\M_{L}$-coloring.
By Theorem \ref{BKPthm},
each block of $G$ is isomorphic to
$K_n^t$ or $C_n^t$
for some integers $n$ and $t$.
Let $B_0$ be an end block of $G$
and let $H$ be the $\M_L$-cover of $G$.
\\

\Case{1.}
$B_0$ is isomorphic to $K_n^t$ 
for some integers $n$ and $t$.


Take a vertex $u$ in $B_0$ 
that is not a cut vertex of $G$.
For an element $c \in L(u)$,
consider the multigraph $G^{(u)} = G - u$ together with 
the list assignment $L^{(u,c)}$ of $G^{(u)}$
and the matching assignment $\M_{L^{(u,c)}}$.
Note that 
each block of $G^{(u)}$ is isomorphic to
$K_{n'}^{t'}$ or $C_{n'}^{t'}$
for some integers $n'$ and $t'$.
If 
some of (I)--(IV) do not hold
for $G^{(u)}$,
$L^{(u,c)}$
and 
$\M_{L^{(u,c)}}$,
then by induction hypothesis 
$G^{(u)}$ admits an $\M_{L^{(u,c)}}$-coloring.
However,
Lemma \ref{inductionlemma} gives
an $\M_L$-coloring in $G$,
a contradiction.
Therefore,
we may assume that 
all of (I)--(IV) hold:
That is,
(I) 
for each vertex $v$ in $G^{(u)}$,
the list assignment $L^{(u,c)}(v)$ has a partition 
$\big\{L'_B(v) : \text{$B$ is a block of $G^{(u)}$ containing $v$}\big\}$ 
such that for any block $B$ containing $v$,
$$|L'_B(v)| = 
\begin{cases}
t'(n'-1) & \text{if $B$ is isomorphic to $K_{n'}^{t'}$}, \\
2t' & \text{if $B$ is isomorphic to $C_{n'}^{t'}$},
\end{cases}
$$
and (II)--(IV) holds.
In particular,
Remark \ref{degrem}
implies that $|L^{(u,c)}(v)| = d_{G^{(u)}}(v)$
for each $v \in V(G^{(u)})$.
Let $B_0'$ be the subgraph of $G^{(u)}$ induced by $V(B_0) - \{u\}$.
Since $u$ is not a cut vertex of $G$,
$B_0'$ is a block of $G^{(u)}$ that is isomorphic to $K_{n-1}^t$.

By (II),
$\bigcup_{v \in V(B_0')} \widetilde{L}_{B_0'}'(v)$
induces 
the graph $H(n-1,t)$,
where each set $\widetilde{L}_{B_0'}'(v)$ corresponds to
$\big\{(i_v,j,k) : j \in [n-2],\  k \in  [ t ]\big\}$
for some $i_v \in [n-1]$.
For $v \in V(B_0')$ and $j \in [n-2]$,
let $L_j(v)$ be the set of elements $c_v \in L^{(u,c)}(v)$
such that $(v,c_v)$ corresponds to a vertex in
$\big\{(i_v,j,k) : k \in [ t ] \big\}$,
and let
$$L_{n-1}(v) = \big\{c_v \in L(v): (u,c)(v,c_v) \in M_{L,uv} \big\}.$$
Note that $|L_j(v)| = t$ for any $j \in [n-2]$.
So, if $v$ is not a cut vertex of $G$,
then 
$L(v) = \bigcup_{j=1}^{n-1} L_j(v)$ 
and
$|L_{n-1}(v)| = |L(v)| - t(n-2) \geq t$.
In particular,
we have $|L_{n-1}(v)| = t$.
Similarly, 
we obtain the same equality
even if $v$ is a cut vertex of $G$.
Let 
$$L_B(v) = 
\begin{cases}
L_B'(v) & \text{if $v \in V(G) - V(B_0)$ or $B \not= B_0$},\\
L(u) & \text{if $v = u$ and $B = B_0$},\\
L_{B_0'}'(v) \cup L_{n-1}(v)
 & \text{if $v \in V(B_0) - \{u\}$ and $B = B_0$}.
\end{cases}
$$
Note that this satisfies (I),
and also (II)--(IV) for all blocks $B$ with $B \not= B_0$.
Since $B_0$ is isomorphic to $K_n^t$,
it suffices to show (II) for $B_0$.

\medskip

Note that for any two vertices $v$ and $w$ in $B_0 - \{u\}$,
since $M_{L, vw}$ is the union of at most $t$ matchings
and $\widetilde{L}_j(v)$ and $\widetilde{L}_j(w)$ are 
all adjacent for $j \in [n-2]$,
there is no edge between $\widetilde{L}_j(v)$ and $\widetilde{L}_{j'}(w)$
if $j \not= j'$.

\medskip
Next, we show the following claim.

\begin{claim} \label{LjvLj'w-nbr} 
For $j \not= j'$ with $j,j' \in [n-1]$,
there is no element 
$c' \in L(u)$ such that $(u,c')$ has a neighbor both
in $\widetilde{L}_j(v)$ and in $\widetilde{L}_{j'}(w)$
for some $v,w \in V(B_0) - \{u\}$.
\end{claim}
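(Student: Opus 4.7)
The plan is a contradiction argument reducing to Lemma~\ref{inductionlemma}. Suppose the claim fails: there exist $c'\in L(u)$, vertices $v,w\in V(B_0)-\{u\}$ (possibly $v=w$), and distinct $j,j'\in[n-1]$ such that $(u,c')$ has a neighbor in $\widetilde L_j(v)$ and in $\widetilde L_{j'}(w)$. I will produce an $\M_{L^{(u,c')}}$-coloring of $G^{(u)}$; Lemma~\ref{inductionlemma} then yields an $\M_L$-coloring of $G$, the desired contradiction. The case $c'=c$ is immediate, since the $(u,c)$-neighbors in each $\widetilde L(v')$ are, by construction, exactly $\widetilde L_{n-1}(v')$, a single row. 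Hence assume $c'\neq c$.

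Next, I apply the induction hypothesis to $G^{(u)}$ equipped with $L^{(u,c')}$ and $\M_{L^{(u,c')}}$. If $G^{(u)}$ admits an $\M_{L^{(u,c')}}$-coloring, we are done. Otherwise (I)--(IV) hold, yielding a parallel partition $L^{(u,c')}(v')=L''_1(v')\sqcup\cdots\sqcup L''_{n-2}(v')$ with each $|L''_k(v')|=t$, inducing an $H(n-1,t)$-structure on $B_0'$ in the cover. Writing $N_{v'}$ for the set of $(u,c')$-neighbors in $\widetilde L(v')$, Remark~\ref{degrem} forces $|N_{v'}|=t$.

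The core is a comparison of the $c$-partition and the $c'$-partition via the uniqueness of the row decomposition of an $H(n-1,t)$ subgraph of the cover: two elements lie in the same row iff they have the same neighborhood in any other column's sublist. Restricting both row decompositions to the common subset $L^{(u,c)}(v')\cap L^{(u,c')}(v')$ forces them to coincide up to a single permutation $\pi$ of $[n-2]$ that is independent of $v'$, namely $L_j(v')\setminus N_{v'}=L''_{\pi(j)}(v')\setminus L_{n-1}(v')$ for each $j\in[n-2]$ and $v'\in V(B_0')$. Through the resulting size identity $|L''_{\pi(j)}(v')\cap L_{n-1}(v')|=|L_j(v')\cap N_{v'}|$, the contradictory hypothesis translates to the parallel statement that the $c'$-rows meeting $L_{n-1}(v')$ vary with $v'\in V(B_0')$.

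With this translation, I construct the required $\M_{L^{(u,c')}}$-coloring of $G^{(u)}$ via a Hall-type selection in $B_0'$: choose an injective assignment $r\colon V(B_0')\to[n-1]$ with $L_{r(v')}(v')\not\subseteq N_{v'}$ for every $v'$, which is possible because each $v'$ has at most one forbidden row (as $|N_{v'}|=t$ equals the row size) and these forbidden rows are not all equal. Any selection $c_{v'}\in L_{r(v')}(v')\setminus N_{v'}$ then yields pairwise non-adjacent vertices in the cover of $B_0$: independence for pairs with rows in $[n-2]$ follows from the observation stated just before the claim, while independence involving row $n-1$ is controlled using the compatibility $\pi$ with the $c'$-partition. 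The coloring of $B_0'$ extends to the remainder of $G^{(u)}$ via the (I)--(IV) structure for $L^{(u,c')}$. The main obstacle is that the $c$-induction alone gives no direct control over the cover edges involving row $n-1$ of the $c$-partition; the $c'$-induction's $H(n-1,t)$-structure, via $\pi$, is precisely what compensates.
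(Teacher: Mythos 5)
Your overall strategy --- pass to $L^{(u,c')}$ and use the misalignment of the neighborhoods of $(u,c')$ with the row structure coming from $L^{(u,c)}$ to create flexibility in $B_0'$ --- is workable, and the Hall-type selection on $B_0'$ is correct: under the contradictory hypothesis not every $v'\in V(B_0')$ can have $N_{v'}$ equal to one and the same row (if $v=w$ then $v$ forbids no row at all, since $N_v$ meets two distinct rows; if $v\neq w$ the only candidate forbidden rows are $j$ at $v$ and $j'$ at $w$, and $j\neq j'$), and the injectivity of $r$ together with the observation stated just before the claim already makes the selected vertices pairwise nonadjacent. In particular the whole detour through the permutation $\pi$ is unnecessary for independence, and the assertion that $\pi$ is well defined and independent of $v'$ is itself a nontrivial claim that you only sketch.

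The genuine gap is the finish. You assert that the partial coloring of $B_0'$ ``extends to the remainder of $G^{(u)}$ via the (I)--(IV) structure for $L^{(u,c')}$.'' But (I)--(IV) holding for $(G^{(u)},L^{(u,c')},\M_{L^{(u,c')}})$ is precisely the condition under which $G^{(u)}$ admits \emph{no} $\M_{L^{(u,c')}}$-coloring (this is the ``if'' direction, \cite[Lemmas 7 and 8]{BKP}), so as written the inference is backwards: the cited structure is the reason an extension must fail, not the reason it exists. An extension argument can in fact be repaired (a cut vertex colored from $B_0$'s share has no neighbors in $\widetilde L_{B_1}'(y)$ for an adjacent block $B_1$, because the rigid structure exhausts the matching degree inside $\widetilde L_{B_1}'$, after which the remaining blocks can be colored greedily), but none of this is in your proposal. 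The cleaner repair is to notice that you never need to build a coloring of $G^{(u)}$ by hand: your independent set in the residual cover of $B_0'$ has size $n-1$, whereas $H(n-1,t)$ has independence number $n-2$, so condition (II) already fails for $(G^{(u)},L^{(u,c')})$, the induction hypothesis supplies an $\M_{L^{(u,c')}}$-coloring of all of $G^{(u)}$, and Lemma~\ref{inductionlemma} gives the contradiction. The paper is more economical still: it exhibits a single surviving vertex $(w,c_w)$ (or $(z,c_z)$ when $v=w$) having at most $t-1$ neighbors in $\widetilde L^{(u,c')}(v)$, which alone rules out the $H(n-1,t)$ structure --- no Hall argument and no comparison of the two partitions is needed.
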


\begin{proof}
Suppose that there exists an element $c' \in L(u)$
such that $(u,c')$ has a neighbor both
in $\widetilde{L}_j(v)$ and in $\widetilde{L}_{j'}(w)$
for some $v,w \in V(B_0) - \{u\}$ and $j,j' \in [n-1]$ with $j \not= j'$.
Note that in this case,
we have $n \geq 3$.
Then consider 
the list assignment $L^{(u,c')}$ of $G^{(u)}$
and 
the matching assignment $\M_{L^{(u,c')}}$.

\begin{itemize}
\item
Assume $v \not= w$.
By symmetry between $v$ and $w$,
we may assume that $w$ is not a cut vertex of $G$.
Since $(u,c')$ has a neighbor in $\widetilde{L}_{j'}(w)$,
there is an element $c_w \in L_j(w)$ that still remains in $L^{(u,c')}(w)$.
For $j \in [n-1]$,
since each vertex in $\widetilde{L}_j(w)$ does not have a neighbor
in $\widetilde{L}(v) - \widetilde{L}_j(v)$,
the existence of a neighbor of $(u,c')$ in $\widetilde{L}_j(v)$ implies
that 
the vertex $(w,c_w)$ has at most $t-1$ neighbors in $\widetilde{L}^{(u,c')}(v)$.
Thus,
$\widetilde{L}^{(u,c')}(w)$ cannot be 
the set 
$\big\{(i,j,k) : j \in [n-2],\  k \in [t] \big\}$
in $H(n-1,t)$.
Hence the $\M_{L^{(u,c')}}$-cover of $B_0'$ 
is not isomorphic to $H(n-1, t)$.
By induction hypothesis 
$G^{(u)}$ admits an $\M_{L^{(u,c')}}$-coloring,
but
Lemma \ref{inductionlemma} gives
an $\M_L$-coloring in $G$,
a contradiction.
\item
Assume $v = w$.
Then consider a vertex $z \not= v$ and an element $c_z$
in $L_{j}(z) \cup L_{j'}(z)$
that remains in $L^{(u,c')}(z)$.
(Such an element $c_z$ exists,
since $|L_{j}(z)| + |L_{j'}(z)| = 2t$ and 
$(u,c')$ has at most $t$ neighbors in $\widetilde{L}_{j}(z) \cup \widetilde{L}_{j'}(z)$.)
By symmetry,
we may assume that $c_z \in L_j(z)$.
Since 
$(u,c')$ has a neighbor in $\widetilde{L}_j(v)$ in $H$,
the vertex $(z,c_z)$ has at most $t-1$ neighbors in $\widetilde{L}^{(u,c')}(v)$.
Therefore,
by the same reason as above,
we see that the $\M_{L^{(u,c')}}$-cover of $B_0'$ 
is not isomorphic to $H(n-1, t)$,
and hence 
the induction hypothesis 
and Lemma \ref{inductionlemma} give
a contradiction, again.
\end{itemize}
Thus,
the claim holds.
\qquad 
$\blacksquare$
\end{proof}

\bigskip
Claim \ref{LjvLj'w-nbr} directly implies that
$L_{B_0}(u)$ can be divided into $n-1$ sets $L_1(u), \ldots , L_{n-1}(u)$
such that 
for each $j \in [n-1]$ and $c' \in L_j(u)$,
the vertex $(u,c')$ has neighbors
only in $\widetilde{L}_{B_0}(u) \cup 
\bigcup_{v \in V(B_0')} \widetilde{L}_j(v)$
in $H$.

\medskip
Next, we will prove the following Claim.

\begin{claim} \label{B_0-nbr}
(1) For each $j \in [n-1]$ and $c' \in L_j(u)$,
every vertex in $\bigcup_{v \in V(B_0')} \widetilde{L}_j(v)$ 
is a neighbor of $(u,c')$ in $H$.  

\medskip
(2) $\widetilde{L}_{n-1}(v)$ and $\widetilde{L}_{n-1}(w)$ are all adjacent
in $H$ for any $v,w \in V(B_0) - \{u\}$.
\end{claim}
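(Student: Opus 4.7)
The plan is to prove both parts by applying the induction hypothesis to $G^{(u)}$ under alternative choices of $c' \in L(u)$, and then (for Part~(2)) to transfer structure back via the uniqueness of the row partition of $H(n-1,t)$.

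For Part~(1), fix any $c' \in L_j(u)$ and consider $G^{(u)}$ equipped with $L^{(u,c')}$ and $\M_{L^{(u,c')}}$. Since $G$ admits no $\M_L$-coloring, Lemma~\ref{inductionlemma} (in contrapositive form) forces $G^{(u)}$ to admit no $\M_{L^{(u,c')}}$-coloring; the induction hypothesis then grants (I)--(IV) for $G^{(u)}$ with $L^{(u,c')}$, and Remark~\ref{degrem} yields $|L^{(u,c')}(v)| = d_{G^{(u)}}(v) = d_G(v) - t$ for every $v \in V(B_0) - \{u\}$. On the other hand, $|L^{(u,c')}(v)|$ equals $|L(v)|$ minus the number of neighbors of $(u, c')$ in $\widetilde{L}(v)$, where the latter count is at most the matching multiplicity $t$ and $|L(v)| \geq d_G(v)$. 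Both inequalities must saturate, giving $|L(v)| = d_G(v)$ and the fact that $(u,c')$ has exactly $t$ neighbors in $\widetilde{L}(v)$. By Claim~\ref{LjvLj'w-nbr} those $t$ neighbors all lie in $\widetilde{L}_j(v)$, and since $|\widetilde{L}_j(v)| = t$ they exhaust it.

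For Part~(2), the case $v = w$ is trivial because $\widetilde{L}(v)$ is itself a clique in $H$, so assume $v \neq w$ (forcing $n \geq 3$). A short double-count shows $|L_{j'}(u)| = t$ for every $j' \in [n-1]$: by Part~(1), $(u, c')$ has exactly $t$ neighbors in $\widetilde{L}(v)$ for each $c' \in L(u)$, so $M_{L,uv}$ has $t \cdot |L(u)|$ edges, and saturation on the $v$-side forces each $(v, c_v) \in \widetilde{L}_{j'}(v)$ to have $t$ neighbors in $\widetilde{L}(u)$, all inside $\widetilde{L}_{j'}(u)$ by Claim~\ref{LjvLj'w-nbr}; hence $|L_{j'}(u)| \geq t$, and summing over $j'$ gives equality. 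Pick any $c' \in L_j(u)$ with $j \in [n-2]$ and apply the induction hypothesis to $G^{(u)}$ with $L^{(u,c')}$. By (II), $\bigcup_{v \in V(B_0')} \widetilde{L}^{(u,c')}_{B_0'}(v)$ induces a copy of $H(n-1,t)$ inside $H$, and by Part~(1) this set equals $\bigcup_{v \in V(B_0')} \bigcup_{j' \in [n-1] \setminus \{j\}} \widetilde{L}_{j'}(v)$. Inside $H(n-1,t)$, the row partition of each column is uniquely determined by adjacency, since two vertices of a column lie in the same row if and only if they share the same $t$ neighbors in any other column; therefore the $n-3$ old row-slices $\widetilde{L}_{j'}(v)$ with $j' \in [n-2] \setminus \{j\}$ (which we already know to be all-adjacent across $v, w \in V(B_0')$ from the first induction on $L^{(u,c)}$) furnish $n-3$ of the $n-2$ new rows. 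The remaining row, of size $t$, must coincide with $\widetilde{L}_{n-1}(v)$, and since rows of $H(n-1,t)$ are cliques across columns, $\widetilde{L}_{n-1}(v) \cup \widetilde{L}_{n-1}(w)$ is a clique in $H$.

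The main obstacle is establishing the uniqueness of the row partition of $H(n-1,t)$ once the column partition has been fixed: this is a short combinatorial verification via the neighborhood criterion above, but it is precisely what lets one transfer the new induction's output back to a statement about the slice $\widetilde{L}_{n-1}$ (which was defined via the original $c$). A minor secondary subtlety is that when some $v \in V(B_0')$ is a cut vertex of $G$, one needs the block partitions of $L^{(u,c)}(v)$ and $L^{(u,c')}(v)$ produced by the two inductions to coincide on every block $B \neq B_0'$ containing $v$, which holds because the $\M_L$-structure on blocks not containing $u$ does not depend on the choice of $c$ or $c'$.
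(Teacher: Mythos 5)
Your argument is correct and follows the same overall mechanism as the paper's: for both parts, apply the induction hypothesis to $G^{(u)}$ with an alternative color $c'$ and exploit the rigidity of the forced $H(n-1,t)$ structure on $B_0'$, invoking Lemma~\ref{inductionlemma} to rule out the alternative. Your Part~(1) is essentially the paper's proof verbatim (the paper phrases the saturation of $|L^{(u,c')}(v)| = d_{G^{(u)}}(v)$ as a contradiction via Remark~\ref{degrem}; you phrase it directly). In Part~(2) the executions diverge: the paper argues negatively --- a missing edge between $\widetilde{L}_{n-1}(v)$ and $\widetilde{L}_{n-1}(w)$ leaves some vertex of $\widetilde{L}_{n-1}(v)$ with at most $t-1$ neighbors in the column of $w$ (using that it has no neighbors in $\widetilde{L}_{j'}(w)$ for $j' \in [n-2]$), which is impossible in $H(n-1,t)$ --- whereas you positively identify the row partition of the new $H(n-1,t)$ copy and deduce that $\bigcup_v \widetilde{L}_{n-1}(v)$ must be the leftover row. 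Both routes work; the paper's is shorter, but your preliminary double-count showing $|L_{j'}(u)| = t$ for every $j'$ fills a small gap the paper leaves implicit, namely that $L_j(u) \neq \emptyset$ for some $j \in [n-2]$ so that the required $c'$ exists. The cut-vertex subtlety you flag at the end --- that the block partition of $L^{(u,c')}(v)$ produced by the second induction must restrict on $B_0'$ to $\bigcup_{j' \neq j} L_{j'}(v)$ --- is real but is equally present (and equally unaddressed) in the paper's own proof, so it should not be counted against you; your one-line justification is heuristic rather than a proof, and a fully rigorous treatment would need to pin down the uniqueness of the block partition, but this goes beyond what the paper itself does.
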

\begin{proof}
(1) If for some $j \in [n-1]$ and $c' \in L_j(u)$,
some vertex in $\bigcup_{v \in V(B_0')} \widetilde{L}_j(v)$
is not a neighbor of the vertex $(u,c')$ in $H$,
then 
$|L^{(u,c')}(v)| \geq
|L(v)| - (t-1) \geq d_{G^{(u)}} (v) +1$,
and hence
the $\M_{L^{(u,c')}}$-cover of $B_0'$ 
is not isomorphic to $H(n-1, t)$.
(See Remark \ref{degrem}.)
By the induction hypothesis 
$G^{(u)}$ admits an $\M_{L^{(u,c')}}$-coloring,
but
Lemma \ref{inductionlemma} gives
an $\M_L$-coloring in $G$,
a contradiction.
Thus, (1) holds.
\qquad 
$\blacksquare$

(2) If some vertex in $\widetilde{L}_{n-1}(v)$ and 
some vertex in $\widetilde{L}_{n-1}(w)$ are not adjacent
in $H$ for $v,w \in V(B_0) - \{u\}$,
then
taking $c' \in L_{j}(u)$ with $j \in [n-2]$,
the vertex in $\widetilde{L}_{n-1}(v)$ 
has at most $t-1$ neighbors in $\widetilde{L}_{n-1}(w)$.
This implies again that
the $\M_{L^{(u,c')}}$-cover of $B_0'$ 
is not isomorphic to $H(n-1, t)$.
the induction hypothesis 
and Lemma \ref{inductionlemma} give a contradiction, again.
This completes the proof of Claim \ref{B_0-nbr}.
\qquad 
$\blacksquare$
\end{proof}

\bigskip
Claim \ref{B_0-nbr} implies that 
$\bigcup_{v \in V(B_0)} \widetilde{L}_{B_0}(v)$
induces 
the graph $H(n,t)$,
where each set $\widetilde{L}_{B_0}(v)$ corresponds to
$\big\{(i_v,j,k) : j \in [n-1],\  k \in  [ t ]\big\}$
for some $i_v \in [n]$.
This shows that $B_0$ also satisfies (II),
and completes the proof of Case 1.
\\

\Case{2.}
$B_0$ is isomorphic to $C_n^t$ for some integers $n$ and $t$.


Since $C_3^t$ is isomorphic to $K_3^t$,
we may assume that $n \geq 4$.
Let $u_{n-1}, u_n, u_{1}$ be the three consecutive vertices of $C_n^t$
such that $u_n$ is not a cut vertex of $G$.
Let $c_{n} \in L(u_{n})$.

\medskip
Suppose first that the vertex $(u_n,c_n)$ in $H$ has 
at most $t-1$ neighbors in $\widetilde{L}(u_1)$.
In this case,
consider the graph $G^{(u_n)} = G - u_n$,
the list assignment $L^{(u_n,c_n)}$ of $G^{(u_n)}$
and 
the matching assignment $\M_{L^{(u_n,c_n)}}$
as in Subsection \ref{greedysec}.
Since the vertex $(u_n,c_n)$ has 
at most $t-1$ neighbors in $\widetilde{L}(u_1)$,
we see that 
$$|L^{(u_n,c_n)}(u_1)| \geq |L(u_1)| - (t-1)
\geq d_G(u_1) - (t-1) = d_{G^{(u_n)}}(u_1) + 1.$$
Thus, by Remark \ref{degrem},
(I) does not hold 
for $G^{(u_n)}$,
$L^{(u_n,c_n)}$ and $\M_{L^{(u_n,c_n)}}$,
and hence 
by induction hypothesis 
$G^{(u_n)}$ admits an $\M_{L^{(u_n,c_n)}}$-coloring.
However,
Lemma \ref{inductionlemma} gives
an $\M_L$-coloring in $G$,
a contradiction.
Therefore, the vertex $(u_n,c_n)$ in $H$ has 
exactly $t$ neighbors in $\widetilde{L}(u_1)$.
By the same argument,
the vertex $(u_n,c_n)$ in $H$ has 
exactly $t$ neighbors in $\widetilde{L}(u_{n-1})$.
Similarly,
we can prove that $|L(u_1)| = |L(u_{n-1})| = 2t$.

\medskip

For $i= 1, n-1$,
let $L_1(u_i)$ be the set of elements $c_i \in L(u_i)$
such that 
$(u_i,c_i)$ is a neighbor of $(u_{n},c_{n})$ in $H$,
and let $L_2(u_i) = L(u_i) - L_1(u_i)$.
Since $|L_1(u_i)| = t$ and $|L(u_i)| = 2t$,
we have $|L_2(u_i)| = |L(u_i)| - |L_1(u_i)| = t$.
Then we construct the graph $G'$
from $G - u_{n}$ by adding $t$ multiple edges connecting $u_{n-1}$ and $u_1$.
Let $B_0'$ be the subgraph of $G'$ induced by $V(B_0) - \{u_n\}$.
Since $n \geq 4$
and $u_n$ is not a cut vertex of $G$,
$B_0'$ is a block of $G'$ that is isomorphic to $C_{n-1}^t$.
Let $L'$ be the restriction of $L$ into $V(G')$,
let $M'_{L',vw} = M_{L,vw}$ for $vw \in E(G') - \{u_{n-1}u_1\}$,
let $M'_{L',u_{n-1}u_1}$ be the set of 
all possible edges between 
$\widetilde{L}_j(u_{n-1})$ and $\widetilde{L}_{j'}(u_{1})$
for $\{j,j'\} = \{1,2\}$,
and let $\M'_{L'} = \big\{M'_{L',vw} : vw \in E(G') \big\}$.

\medskip
Suppose that 
$G'$ admits an $\M'_{L'}$-coloring,
that is,
the $\M'_{L'}$-cover of $G'$ contains an independent set $I'$
of size $|V(G')| = |V(G)| -1$.
For $i = 1,n-1$, let $c_i \in L(u_i)$ with $(u_i,c_i) \in I'$.
If $c_{1} \in L_2(u_{1})$ and $c_{n-1} \in L_2(u_{n-1})$,
then $I' \cup \{(u_n,c_n)\}$ is an independent set in $H$ of size $|V(G)|$,
a contradiction.
Thus,
we may assume that 
either $c_1 \in L_1(u_1)$ or $c_{n-1} \in L_1(u_{n-1})$.
Since $(u_1,c_1)$ and $(u_{n-1},c_{n-1})$ are not adjacent
in the $\M'_{L'}$-cover of $G'$,
the choice of $M'_{L',u_{n-1}u_1}$ implies that
both $c_1 \in L_1(u_1)$ and $c_{n-1} \in L_1(u_{n-1})$ hold.
Furthermore,
for any $c_n' \in L(u_n)$,
since $I' \cup \{(u_n,c_n')\}$ is not an independent set in $H$,
the vertex $(u_n,c_n')$ must be a neighbor of 
either $(u_1,c_1)$ or $(u_{n-1},c_{n-1})$.
Since $(u_n,c_n)$ is a neighbor of both 
$(u_1,c_1)$ and $(u_{n-1},c_{n-1})$,
there are at least $|L(u_1)| + 1 \geq 2t+1$ edges in $H$
between $\big\{(u_1,c_1), (u_{n-1},c_{n-1}) \big\}$
and $\widetilde{L}(u_n)$.
This contradicts that 
both $(u_1,c_1)$ and $(u_{n-1},c_{n-1})$ have 
at most $t$ neighbors in $\widetilde{L}(u_n)$.
Therefore,
we have 
\\

\begin{tabular}{ll}
$(P1):$ &
the $\M'_{L'}$-cover of $G'$ contains no independent set 
of size $|V(G')|$.
\end{tabular}
\\

By induction hypothesis,
all of (I)--(IV) hold for $G'$, $L'$ and $\M'_{L'}$:
That is,
(I) 
for each vertex $v$ in $G'$,
the list assignment $L'(v)$ has a partition 
$\big\{L'_B(v) : \text{$B$ is a block of $G'$ containing $v$}\big\}$ 
such that for any block $B$ containing $v$,
$$|L'_B(v)| = 
\begin{cases}
t'(n'-1) & \text{if $B$ is isomorphic to $K_{n'}^{t'}$}, \\
2t' & \text{if $B$ is isomorphic to $C_{n'}^{t'}$},
\end{cases}
$$
and (II)--(IV) holds.
For $v \in V(G)$ and for a block $B$ containing $v$,
let 
$$L_B(v) = 
\begin{cases}
L_B'(v) & \text{if $v \not= u_n$},\\
L(v) & \text{if $v = u_n$ and $B = B_0$}.\\
\end{cases}
$$
Note that this satisfies (I),
and also (II)--(IV) for all blocks $B$ with $B \not= B_0$.
Since $B_0$ is isomorphic to $C_n^t$,
it suffices to show (III) and (IV) for $B_0$.

\if0
Recall that $M'_{L',u_{n-1}u_1}$ is the set of 
all possible edges between 
$\widetilde{L}_j(u_{n-1})$ and $\widetilde{L}_{j'}(u_{1})$
for $\{j,j'\} = \{1,2\}$.
Thus,
by (III) or (IV) for $B_0'$ and symmetry,
we may assume that
$\widetilde{L}_j(u_{n-1})$ for $j \in \{1,2\}$
forms 
$\big\{({n-1},j,k) : k \in  [ t ]\big\}$
of
the $t$-fat ladder of length $n-1$ (when $n-1$ is odd)
or
the $t$-fat M\"{o}bius ladder of length $n-1$ (when $n-1$ is even)
in 
$\bigcup_{v \in V(B_0')} \widetilde{L}_{B_0'}(v)$
in the $\M'_{L'}$-cover of $G'$.
In particular,
we may assume that 
$\widetilde{L}_{B_0'}(u_{1})$ and $\widetilde{L}_{B_0'}(u_{n-1})$
corresponds to $\big\{(1,j,k) : j \in \{1,2\},\  k \in  [ t ]\big\}$
and 
$\big\{(n-1,j,k) : j \in \{1,2\},\  k \in  [ t ]\big\}$,
respectively.
\fi

\medskip
Suppose that there are three elements 
$c_{n-1}' \in L_1(u_{n-1})$,
$c_n' \in L(u_n)$ and $c_1' \in L_2(u_1)$
such that neither $(u_{n-1},c_{n-1}')$ nor $(u_1,c_1')$ is
a neighbor of $(u_n,c_n')$ in $H$.
Let $M^-_{L',vw} = M_{L,vw}$ for $vw \in E(G') - \{u_{n-1}u_1\}$,
let 
$$M^-_{L',u_{n-1}u_1} = M'_{L',u_{n-1}u_1} - \big\{(u_{n-1},c_{n-1}')(u_1,c_1')\big\},$$
and let 
$\M^-_{L'} = \big\{M^-_{L',vw} : vw \in E(G') \big\}$.
Since $G'$, $L'$ and $\M'_{L'}$ satisfy (I)--(IV)
and the $\M^-_{L'}$-cover of $G'$ is obtained
from the $\M'_{L'}$-cover of $G'$ by deleting one edge,
we see that 
$G'$, $L'$ and $\M^-_{L'}$ do not satisfy (III) or (IV).
Therefore,
by the induction hypothesis,
$G'$ admits an $\M^-_{L'}$-coloring:
That is,
the $\M^-_{L'}$-cover of $G'$ contains an independent set $I^-$
of size $|V(G')| = |V(G)| -1$.
If either $(u_{n-1},c_{n-1}') \not\in I^-$ or $(u_1,c_1') \not\in I^-$,
then 
$I^-$ is also an independent set of the $\M'_{L'}$-cover of $G'$,
which contradicts $(P1)$.
Thus,
$(u_{n-1},c_{n-1}') \in I^-$ and $(u_1,c_1') \in I^-$
and hence 
$I^- \cup \big\{(u_n, c_n')\big\}$ is an independent set in $H$
of size $|V(G)|$, again a contradiction.
Therefore, the following holds.
\\

\begin{tabular}{ll}
$(P2):$ & 
There are no three elements 
$c_{n-1}' \in L_1(u_{n-1})$,
$c_n' \in L(u_n)$ and $c_1' \in L_2(u_1)$  \\
&
such that neither $(u_{n-1},c_{n-1}')$ nor $(u_1,c_1')$ is
a neighbor of $(u_n,c_n')$ in $H$.
\end{tabular}
\\

\noindent
Furthermore,
by symmetry,
there are no three elements 
$c_{n-1}' \in L_2(u_{n-1})$,
$c_n' \in L(u_n)$ and $c_1' \in L_1(u_1)$
such that neither $(u_{n-1},c_{n-1}')$ nor $(u_1,c_1')$ is
a neighbor of $(u_n,c_n')$ in $H$.

\medskip
Then, we can show the following property.

\medskip
\begin{tabular}{ll}
$(P3):$ & For any $c_n' \in L(u_n)$,
either all vertices in $\widetilde{L}_1(u_{n-1})$
or 
all vertices in $\widetilde{L}_2(u_{n-1})$ 
\\ &
are neighbors of $(u_n, c_n')$.
\end{tabular}
\\

Suppose that for $c_n' \in L(u_n)$,
there are a vertex in $\widetilde{L}_1(u_{n-1})$
and a vertex in $\widetilde{L}_2(u_{n-1})$
neither of which are neighbors of $(u_n,c_n')$ in $H$.
Since $(u_n,c_n')$ has at most $t$ neighbors in $\widetilde{L}(u_1)$,
there exists an element $c_1' \in L(u_1)$ 
such that $(u_1,c_1')$ is not a neighbor of $(u_n,c_n')$.
We here assume $c_1' \in L_2(u_1)$,
but the other case is symmetric.
(Here we do not use $c_n$).
Let $c_{n-1}' \in L_1(u_{n-1})$ 
such that $(u_{n-1}, c_{n-1}')$ is not a neighbor of $(u_n,c_n')$.
Then 
the three elements $c_{n-1}' \in L_1(u_{n-1})$,
$c_n' \in L(u_n)$ and $c_1' \in L_2(u_1)$
contradict $(P2)$.  Thus $(P3)$ holds.

\medskip

Thus we have followings:

\begin{itemize}

\item
For any $c_n' \in L(u_n)$,
since $(u_n,c_n')$ has at most $t$ neighbors in $\widetilde{L}(u_{n-1})$
and $|\widetilde{L}_1(u_{n})| = |\widetilde{L}_2(u_{n})| = t$,
(P3) directly implies that
the vertex $(u_n,c_n')$ has no neighbor
either in $\widetilde{L}_1(u_{n-1})$ or in $\widetilde{L}_2(u_{n-1})$.

\item
Thus,
$L(u_n)$ can be divided into two sets $L_1(u_n)$ and $L_{2}(u_n)$
such that 
for each $j \in \{1,2\}$ and $c_n' \in L_j(u_n)$,
the set of neighbors of $(u_{n},c_{n}')$ in $\widetilde{L}(u_{n-1})$
is $\widetilde{L}_j(u_{n-1})$.
Note that $c_n \in L_1(u_n)$.

\item
Let $c_n' \in L_2(u_n)$.
Since $(u_n, c_n')$ has no neighbors in $\widetilde{L}_1(u_{n-1})$,
$(P2)$ implies that 
all vertices in $\widetilde{L}_2(u_{1})$ are neighbors of $(u_n,c_n')$ in $H$.
Since $(u_n,c_n')$ has at most $t$ neighbors in $\widetilde{L}(u_{1})$,
no vertices in $\widetilde{L}_1(u_{1})$ are neighbors of $(u_n,c_n')$ in $H$.

\item
Similarly,
for $c_n' \in L_1(u_n)$,
all vertices in $\widetilde{L}_1(u_{1})$ are neighbors of $(u_n,c_n')$ in $H$
and 
no vertices in $\widetilde{L}_2(u_{1})$ are neighbors of $(u_n,c_n')$.

\end{itemize}

Recall that
$M'_{L',u_{n-1}u_1}$ is the set of 
all possible edges between 
$\widetilde{L}_j(u_{n-1})$ and $\widetilde{L}_{j'}(u_{1})$
for $\{j,j'\} = \{1,2\}$.
Then
the above conditions imply that
$M_{L,u_{n-1}u_n}$ is the set of 
all possible edges between 
$\widetilde{L}_j(u_{n-1})$ and $\widetilde{L}_{j}(u_{n})$
for $j \in \{1,2\}$,
and $M_{L,u_{n}u_1}$ is the set of 
all possible edges between 
$\widetilde{L}_j(u_{n})$ and $\widetilde{L}_{j}(u_{1})$
for $j \in \{1,2\}$.

\medskip
When $n$ is odd,
it follows from (IV) for $B_0'$ that
$\bigcup_{v \in V(B_0')} \widetilde{L'}_{B_0'}(v)$
induces 
a $t$-fat M\"{o}bius ladder of length $n-1$
in the $\M'_{L'}$-cover of $G'$.
Then it is not difficult to see that
$\bigcup_{v \in V(B_0)} \widetilde{L}_{B_0}(v)$
induces 
a $t$-fat ladder of length $n$.
Therefore,
(III) and trivially (IV) hold for $B_0$.

\medskip
By the same way,
we show that when $n$ is even,
$\bigcup_{v \in V(B_0)} \widetilde{L}_{B_0}(v)$
induces a $t$-fat M\"{o}bius ladder of length $n$.
Therefore,
(IV) and trivially (III) hold for $B_0$.
This completes the proof of Theorem \ref{mainthm}.
\qed


\end{document}